\newtheorem{theorem}{Theorem}           
\newtheorem{lemma}{Lemma}               
\newtheorem{remark}{Remark}               
\newtheorem{corollary}{Corollary}
\newtheorem{proposition}{Proposition}
\newcommand{\E}{\mathbb{E}}
\newcommand{\edr}{\mathrm{e}}
\newcommand{\ddr}{\mathrm{d}}
\begin{document}

\begin{frontmatter}

\begin{fmbox}
\dochead{Research}


\title{On the Hurwitz zeta function with an application to the beta-exponential distribution}


\author[
   addressref={aff1},                   
   corref={aff1},                       
   email={julyan.arbel@inria.fr}   
]{\inits{J}\fnm{Julyan} \snm{Arbel}}
\author[
   addressref={aff2},
   email={olivier.marchal@univ-st-etienne.fr}
]{\inits{O}\fnm{Olivier} \snm{Marchal}}
\author[
   addressref={aff3},
   email={bernardo.nipoti@unimib.it}
]{\inits{B}\fnm{Bernardo} \snm{Nipoti}}


\address[id=aff1]{
  \orgname{Universit\'e Grenoble Alpes, Inria, CNRS, Grenoble INP, LJK}, 
  \postcode{38000}                                
  \city{Grenoble},                              
  \cny{France}                                    
}
\address[id=aff2]{%
  \orgname{Universit\'e de Lyon, CNRS UMR 5208, Universit\'e Jean Monnet, Institut Camille Jordan},
  \postcode{69000}
  \city{Lyon},
  \cny{France}
}
\address[id=aff3]{%
  \orgname{Department of Economics, Management and Statistics, 
Universit\`a degli Studi di Milano Bicocca},
  \city{Milan},
  \cny{Italy}
}



\end{fmbox}


\begin{abstractbox}

\begin{abstract} 
        We prove a monotonicity property of the Hurwitz zeta function which, in turn, translates into a chain of inequalities for polygamma functions of different orders. We provide a probabilistic interpretation of our result by exploiting a connection between Hurwitz zeta function and the cumulants of the beta-exponential distribution.
\end{abstract}


\begin{keyword}
\kwd{Hurwitz zeta function}
\kwd{Polygamma function}
\kwd{beta-exponential distribution}
\kwd{Monotonicity}
\kwd{Cumulants}
\end{keyword}


\end{abstractbox}
%

\end{frontmatter}




\section{Main result}\label{sec:main}

Let $\zeta(x,s)=\underset{k=0}{\overset{+\infty}{\sum}} (k+s)^{-x}$ be the Hurwitz zeta function \citep{berndt1972hurwitz,srivastava2012zeta} defined for $(x,s)\in (1,+\infty)\times(0,+\infty)$, and, for any $a>0$ and $b>0$, consider the function
\begin{equation}\label{eq:main}
    x\mapsto f(x,a,b) = \left(
    \zeta(x,b) - \zeta(x,a+b)
    \right)^{\frac{1}{x}}, 
\end{equation}
defined on $[1,+\infty)$, where $f(1,a,b)$ is defined by continuity as
\begin{equation}\label{eq:main2}
    f(1,a,b)=\sum_{k=0}^\infty\left( \frac{1}{k+b}-\frac{1}{k+a+b}\right)=\sum_{k=0}^\infty \frac{a}{(k+b)(k+a+b)}.
\end{equation}
The function $f(x,a,b)$ can be alternatively written, with a geometric flavour, as $$f(x,a,b) = \left(\Vert \bm{v}_{a+b} \Vert_x^x - \Vert \bm{v}_{b} \Vert_x^x \right)^{\frac{1}{x}},$$
where, for any $s>0$, $\bm{v}_s$ is an infinite-dimensional vector whose $k^{\text{th}}$ component coincides with $(k-1+s)^{-1}$.\\




The main result of the paper establishes that the function $x\mapsto f(x,a,b)$ is monotone on $[1,+\infty)$ with variations only determined by the value of $a$. More specifically,

\begin{theorem}\label{thm:main}
        For any $b>0$, the function $x\mapsto f(x,a,b)$ defined on $[1,+\infty)$ is increasing\footnote{Throughout the paper, we say that a function $f$ is increasing (resp. decreasing) if $x<y$ implies $f(x)<f(y)$ (resp. $f(x)>f(y)$), and that a quantity $A$ is positive (resp. negative) if $A>0$ (resp. $A<0$).} if $0<a<1$, decreasing if $a>1$, and constantly equal to $\frac{1}{b}$ if $a=1$.
\end{theorem}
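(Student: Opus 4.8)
The plan is to determine the sign of $\partial_x\log f(x,a,b)$. Write $g(x)=\zeta(x,b)-\zeta(x,a+b)$; it is positive on $[1,+\infty)$, each summand $(k+b)^{-x}-(k+a+b)^{-x}$ being positive, and it is real-analytic there, the simple pole of each Hurwitz zeta function at $x=1$ cancelling in the difference. From $\log f(x,a,b)=x^{-1}\log g(x)$,
\[
\partial_x\log f(x,a,b)=\frac{x\,(\log g)'(x)-\log g(x)}{x^{2}}=:\frac{R(x)}{x^{2}},\qquad R'(x)=x\,(\log g)''(x).
\]
The case $a=1$ is immediate: the series telescopes, $g(x)=\zeta(x,b)-\zeta(x,b+1)=b^{-x}$, hence $f(x,1,b)\equiv 1/b$. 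For $a\neq 1$ I would prove two facts: (a) $\lim_{x\to+\infty}R(x)=0$; and (b) $(\log g)''$ has constant sign on $[1,+\infty)$, negative if $0<a<1$ and positive if $a>1$. These suffice: by (b) the function $R$ is strictly monotone on $[1,+\infty)$, so with (a) it is positive there when $a<1$ and negative there when $a>1$; as $f>0$, dividing by $x^{2}$ shows $f$ is increasing in the first case and decreasing in the second.

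Fact (a) is routine: from $g(x)b^{x}=1+\sum_{k\ge 1}\bigl(b/(k+b)\bigr)^{x}-\sum_{k\ge 0}\bigl(b/(k+a+b)\bigr)^{x}$ one gets $g(x)b^{x}=1+O(\theta^{x})$ with $\theta=b/\bigl(b+\min(a,1)\bigr)<1$, with the same exponential bound on the $x$-derivative, so that $R(x)=x\,\bigl(\log(g b^{x})\bigr)'(x)-\log\bigl(g(x)b^{x}\bigr)\to 0$. The substance is fact (b), which I would obtain from an integral representation. Using $s^{-x}=\Gamma(x)^{-1}\int_{0}^{\infty}t^{x-1}e^{-st}\,\ddr t$ and summing a geometric series,
\[
g(x)=\frac{1}{\Gamma(x)}\int_{0}^{\infty}t^{x-1}\,\frac{e^{-bt}(1-e^{-at})}{1-e^{-t}}\,\ddr t
=\frac{b^{-x}}{\Gamma(x)}\int_{0}^{\infty}t^{x-1}e^{-t}\,r(t/b)\,\ddr t,
\qquad r(y):=\frac{1-e^{-ay}}{1-e^{-y}},
\]
the second equality after the change of variable $t\mapsto t/b$; equivalently $g(x)b^{x}=\E\bigl[r(T/b)\bigr]$ for $T\sim\mathrm{Gamma}(x,1)$. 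Differentiating the logarithm twice,
\[
(\log g)''(x)=\Bigl(\log\!\int_{0}^{\infty}t^{x-1}e^{-t}r(t/b)\,\ddr t\Bigr)''-(\log\Gamma)''(x)=\Var_{\nu_x}(\log T)-\Var(\log T),
\]
where $\nu_x$ is the probability measure with density proportional to $t^{x-1}e^{-t}r(t/b)$ — the $\mathrm{Gamma}(x,1)$ law reweighted by the factor $r(T/b)$ — and the bare $\Var$ is under $\mathrm{Gamma}(x,1)$. A short computation with $v(z):=z/(e^{z}-1)$, which is strictly decreasing on $(0,+\infty)$, gives $(\log r)'(y)=\bigl(v(ay)-v(y)\bigr)/y$, so $r$ is strictly increasing if $0<a<1$ and strictly decreasing if $a>1$. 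Hence fact (b) amounts to: reweighting the $\mathrm{Gamma}(x,1)$ law by an increasing (respectively decreasing) factor of this form strictly decreases (respectively increases) $\Var(\log T)$.

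The hard part will be this last assertion, and it has to use the precise shape of $r$: it is false for arbitrary monotone reweightings, since $(\log T-\psi(x))^{2}$ is $U$-shaped in $T$ — $\psi$ the digamma function — and its covariance with a generic monotone function has no definite sign. I plan to exploit the expansion $r(y)=\sum_{j\ge 0}\bigl(e^{-jy}-e^{-(j+a)y}\bigr)$, which exhibits $\nu_x$ as a \emph{signed} mixture of $\mathrm{Gamma}(x,\beta)$ laws with $\beta$ running over $\{1+j/b\}_{j\ge 0}\cup\{1+(j+a)/b\}_{j\ge 0}$; all of these share $\Var(\log T)=\psi'(x)$ but have mean $\psi(x)-\log\beta$, and after simplification (using $\sum_{k\ge 0}(k+s)^{-x}\log(k+s)=-\partial_x\zeta(x,s)$ to identify the weighted mean of the $\log\beta$) one arrives at the compact identity
\[
(\log g)''(x)=\frac{1}{g(x)}\,\bigl(\Xi(x,b)-\Xi(x,a+b)\bigr),\qquad
\Xi(x,s):=\sum_{k\ge 0}(k+s)^{-x}\bigl(\log(k+s)+(\log g)'(x)\bigr)^{2}.
\]
It then remains to show $\Xi(x,b)<\Xi(x,a+b)$ when $0<a<1$ and $\Xi(x,b)>\Xi(x,a+b)$ when $a>1$, the two sides agreeing when $a=1$. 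I would prove this by writing $\Xi(x,b)-\Xi(x,a+b)=-\int_{b}^{\infty}w(t)\,\varphi'(t)\,\ddr t$, with $\varphi(t)=t^{-x}\bigl(\log t+(\log g)'(x)\bigr)^{2}$ and the nonnegative counting function $w(t)=\#\{k\ge 0:\ k+b\le t<k+a+b\}$, then pinning down the sign via the three monotonicity intervals of $\varphi$ and an Abel-type summation over the blocks of $w$. Establishing this sign uniformly for $x\ge 1$ and $b>0$ — the independence of the monotonicity direction from $b$ must emerge precisely here — is the main technical obstacle I anticipate.
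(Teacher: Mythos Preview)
Your route is genuinely different from the paper's, and the decisive step is left open. You reduce the sign of $\partial_x\log f=R(x)/x^2$ to two facts: the limit $R(+\infty)=0$ (your (a), which is correct) and the constant sign of $(\log g)''$ (your (b)). The paper instead attacks $R(x)$ \emph{directly}: writing $S(x)=g(x)b^{x}=1+\sum_{n\ge1}(s_n-r_n)$ with $s_n=(b/(n+b))^x$ and $r_n=(b/(n+b+a-1))^x$, one finds $R(x)\,S(x)=\sum_{n\ge1}(s_n\ln s_n-r_n\ln r_n)-S\ln S$, and the sign of this expression is established by an elementary lemma: the truncations $u_N=(1+\sum_{n\le N}(s_n-r_n))\ln(1+\sum_{n\le N}(s_n-r_n))-\sum_{n\le N}(s_n\ln s_n-r_n\ln r_n)$ are shown to be monotone in $N$ by a one-variable convexity check on each increment $u_{N+1}-u_N$. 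No asymptotics, no integral representation, no second derivative of $\log g$.

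Your fact (b) is strictly stronger than what the theorem needs: log-concavity (resp.\ log-convexity) of $g$ for $a<1$ (resp.\ $a>1$) implies the monotonicity of $g^{1/x}$ via your $R(\infty)=0$ argument, but not conversely. You correctly observe that it is \emph{not} a generic monotone-reweighting phenomenon, so the precise shape of $r$ must be used. Your $\Xi$ identity $(\log g)''=g^{-1}(\Xi(x,b)-\Xi(x,a+b))$ is correct, and the Abel-summation plan is reasonable, but nothing in the proposal carries it out: the three monotonicity intervals of $\varphi(t)=t^{-x}(\log t+(\log g)'(x))^2$ are delimited by $t_1=e^{-(\log g)'(x)}$ and $t_2=t_1 e^{2/x}$, and how these sit relative to the interlaced lattice $\{k+b\}\cup\{k+a+b\}$ depends on $b$ and $x$ in exactly the way you flag as the main obstacle. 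As written, this is a genuine gap. If the goal is only Theorem~\ref{thm:main}, the paper's discrete lemma bypasses all of it; if you can close the $\Xi$ argument, you would obtain the sharper statement that $g$ itself is log-concave or log-convex, which the paper does not claim.
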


Remarkably, when the first argument $x$ of $f$ is a positive integer, say  $x=n\in\mathbb{N}\setminus\{0\}$, the monotonicity property established by Theorem \ref{thm:main} translates into a chain of inequalities in terms of polygamma functions of different orders, which might be of independent interest. Namely, for any $b>0$ and any $0<a_1<1<a_2$, the following holds:
\begin{equation}\label{eq:poly}
    \begin{cases}\psi^{(0)}(b+a_1)-\psi^{(0)}(b)<\ldots<
\left(\frac{\psi^{(n)}(b+a_1)-\psi^{(n)}(b)}{n!}\right)^{\frac{1}{n+1}}<\ldots <
    \frac{1}{b},\\
    \psi^{(0)}(b+a_2)-\psi^{(0)}(b)>\ldots>
\left(\frac{\psi^{(n)}(b+a_2)-\psi^{(n)}(b)}{n!}\right)^{\frac{1}{n+1}}>\ldots >
    \frac{1}{b},
    \end{cases}
\end{equation}
where $\psi^{(m)}$ for $m\in\mathbb{N}$ denotes the polygamma function of order $m$, defined as the derivative of order $m+1$ of the logarithm of the gamma function.

Theorem~\ref{thm:main} and the derived inequalities in \eqref{eq:poly} add to the current body of literature about inequalities and monotonicity properties of the Hurwitz zeta function \citep{berndt1972hurwitz,simsek2006q,simsek2007twisted,srivastava2011two,leping2013hilbert} and polygamma functions \citep{alzer1998inequalities,alzer2001mean,batir2005some,qi2010complete,guo2015sharp}, respectively. The last part of the statement of Theorem~\ref{thm:main} is immediately verified as,
when $a=1$, $f(x,a,b)$ simplifies to a telescoping series which gives $f(x,1,b)=\frac{1}{b}$ for every $x\in[1,+\infty)$. The rest of the proof is presented in Section~\ref{sec:proof} while  Section~\ref{sec:application} is dedicated to an application of Theorem~\ref{thm:main} to the study of the so-called beta-exponential distribution \citep{gupta1999theory,nadarajah2006beta}, obtained by applying a log-transformation to a beta distributed random variable. Specifically, the chain of inequalities in \eqref{eq:poly}, formally derived in Section~\ref{sec:application}, nicely translates into an analogous monotonicity property involving the cumulants of the beta-exponential distribution. Furthermore, the dichotomy observed in Theorem~\ref{thm:main}, determined by the position of $a$ with respect to 1, is shown to hold for the beta-exponential distribution at the level of (i) its cumulants (whether function~\eqref{eq:cumulant-eb} is increasing or not), (ii) its dispersion (Corollary~\ref{cor:EB-dispersion}), (iii) the shape of its density (log-convex or log-concave, Proposition~\ref{prop:shape-density} and Figure~\ref{fig:proba-interpretation})  and (iv) its hazard function (increasing or decreasing, Proposition~\ref{prop:hazard}).



\section{Proof of Theorem~\ref{thm:main}\label{sec:proof}}

The proof of Theorem~\ref{thm:main} relies on Lemma~\ref{lem:a-larger-1}, stated below. Lemma~\ref{lem:a-larger-1} considers two sequences and establishes the monotonicity of a third one, function of the first two, whose direction depends on how the two original sequences compare with each other. The same dichotomy, in Theorem~\ref{thm:main}, is driven by the position of the real number $a$ with respect to 1.


\subsection{A lemma on the monotonicity and the limit of some sequence}
\begin{lemma}\label{lem:a-larger-1}Let $(s_n)_{n\geq 1}$ and $(r_n)_{n\geq 1}$ be two sequences in $(0,1)$ and define, for $N\geq 1$,
\begin{equation*}
	 u_N\overset{\mathrm{def}}{=}\left(1+\sum_{n=1}^N (s_n-r_n)\right)\ln\left(1+\sum_{n=1}^N (s_n-r_n)\right)-\sum_{n=1}^N(s_n\ln s_n-r_n\ln r_n).
\end{equation*}
We define by convention $u_0=0$.
Then two cases are considered:
\begin{enumerate}
	\item[\textbf{1.}] if, for any $n\geq 1$, $r_n\leq s_n$ then,
	for all $N\geq 0$, we have $u_{N+1}\geq u_N$, with the equality holding if and only if $s_{N+1}=r_{N+1}$;
	\item[\textbf{2.}] if, for any $n\geq 1$, $s_{n+1}\leq r_{n+1}\leq s_n\leq r_n$ then, for all $N\geq 0$, we have $u_{N+1}\leq u_N$, with the equality holding if and only if $s_{N+1}=r_{N+1}$.
\end{enumerate}
Moreover, if $\underset{n=1}{\overset{\infty}{\sum}} |s_n-r_n|<\infty$ (implying absolute convergence of the series $\underset{n=1}{\overset{\infty}{\sum}} (s_n\ln s_n-r_n\ln r_n)$) then 
\begin{equation*}
	u_\infty \overset{\mathrm{def}}{=}\lim_{N\to +\infty}u_N= \left(1+\sum_{n=1}^{\infty} (s_n-r_n)\right)\ln\left(1+\sum_{n=1}^{\infty} (s_n-r_n)\right)-\sum_{n=1}^{\infty}(s_n\ln s_n-r_n\ln r_n)
\end{equation*}
exists and satisfies $u_\infty\geq 0$ in case \textbf{1}, while $u_\infty\leq 0$ in case \textbf{2}. In both cases, $u_\infty= 0$ if and only if the two sequences $(r_n)_{n\geq 1}$ and $(s_n)_{n\geq 1}$ equal each other.
\end{lemma}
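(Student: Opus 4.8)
The statement to prove is Lemma~\ref{lem:a-larger-1}. The plan is to analyse the increments $u_{N+1}-u_N$ and reduce everything to a one-variable convexity inequality, and then to handle the limit as $N\to\infty$ by a monotone-convergence/continuity argument. Write $\Sigma_N = 1+\sum_{n=1}^N(s_n-r_n)$, so that $u_N = \Sigma_N\ln\Sigma_N - \sum_{n=1}^N(s_n\ln s_n - r_n\ln r_n)$, with $\Sigma_0=1$. Then
\begin{equation*}
u_{N+1}-u_N = \bigl(\Sigma_{N+1}\ln\Sigma_{N+1}-\Sigma_N\ln\Sigma_N\bigr) - \bigl(s_{N+1}\ln s_{N+1}-r_{N+1}\ln r_{N+1}\bigr),
\end{equation*}
where $\Sigma_{N+1}=\Sigma_N+(s_{N+1}-r_{N+1})$. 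The natural tool here is the convexity of $t\mapsto \varphi(t)=t\ln t$ on $(0,+\infty)$, and more precisely the fact that its increment over an interval of fixed length is monotone: for fixed $h>0$, the map $t\mapsto \varphi(t+h)-\varphi(t)$ is increasing in $t$ (since $\varphi''>0$), and symmetrically for $h<0$. The first step is therefore to express $u_{N+1}-u_N$ as a difference of two such increments of $\varphi$ over intervals of the \emph{same} length, and to compare the base points.

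\textbf{Case 1.} When $r_n\le s_n$ for all $n$, set $h=s_{N+1}-r_{N+1}\ge 0$. One has $s_{N+1}\ln s_{N+1}-r_{N+1}\ln r_{N+1}=\varphi(r_{N+1}+h)-\varphi(r_{N+1})$, whereas $\Sigma_{N+1}\ln\Sigma_{N+1}-\Sigma_N\ln\Sigma_N=\varphi(\Sigma_N+h)-\varphi(\Sigma_N)$. Hence $u_{N+1}-u_N = \bigl[\varphi(\Sigma_N+h)-\varphi(\Sigma_N)\bigr]-\bigl[\varphi(r_{N+1}+h)-\varphi(r_{N+1})\bigr]$. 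Since $r_1,\dots,r_N\in(0,1)$ and $s_n-r_n\ge0$, we have $\Sigma_N\ge 1>r_{N+1}$; by the monotonicity of the fixed-length increment of the convex function $\varphi$, the bracketed difference is $\ge0$, and it is $0$ exactly when $h=0$, i.e. $s_{N+1}=r_{N+1}$ (using strict convexity of $\varphi$ when $h>0$). This is the crux: one must carefully verify the base-point inequality $\Sigma_N\ge 1$, which is where the hypothesis $r_n\le s_n$ (giving $\Sigma_N$ nondecreasing in $N$ and $\ge\Sigma_0=1$) is used, together with $r_{N+1}<1$.

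\textbf{Case 2.} When $s_{n+1}\le r_{n+1}\le s_n\le r_n$, now $h=s_{N+1}-r_{N+1}\le 0$. Write $k=-h\ge0$; then $s_{N+1}\ln s_{N+1}-r_{N+1}\ln r_{N+1}=-\bigl[\varphi(s_{N+1}+k)-\varphi(s_{N+1})\bigr]$ and $\Sigma_{N+1}\ln\Sigma_{N+1}-\Sigma_N\ln\Sigma_N=-\bigl[\varphi(\Sigma_{N+1}+k)-\varphi(\Sigma_{N+1})\bigr]$, so $u_{N+1}-u_N = \bigl[\varphi(s_{N+1}+k)-\varphi(s_{N+1})\bigr]-\bigl[\varphi(\Sigma_{N+1}+k)-\varphi(\Sigma_{N+1})\bigr]$. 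Here I need $\Sigma_{N+1}\ge s_{N+1}$; since each increment $s_n-r_n\le 0$, $\Sigma_{N+1}=1+\sum_{n=1}^{N+1}(s_n-r_n)\le 1$, but more to the point one checks $\Sigma_{N+1}\ge s_{N+1}$ directly from the chain $s_{n+1}\le r_{n+1}\le s_n$: indeed $\Sigma_{N+1}-s_{N+1}=1-s_1+\sum_{n=1}^{N}(s_{n+1}-r_n) + (s_1 - s_{N+1}) - (s_1-s_{N+1})$\,— more cleanly, $\Sigma_{N+1} = 1 + (s_1-r_1) + (s_2 - r_2) + \cdots + (s_{N+1}-r_{N+1})$ and grouping as $1 - r_1 + (s_1 - r_2) + (s_2-r_3) + \cdots + (s_N - r_{N+1}) + s_{N+1}$ shows every grouped term is nonnegative (using $s_n\ge r_{n+1}$ and $r_1<1$), hence $\Sigma_{N+1}\ge s_{N+1}$. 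Again by monotonicity of the fixed-length increment of $\varphi$, $u_{N+1}-u_N\le 0$ with equality iff $k=0$. The careful regrouping to get $\Sigma_{N+1}\ge s_{N+1}$ is the main obstacle in this case.

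\textbf{The limit.} Assuming $\sum_n|s_n-r_n|<\infty$: absolute convergence of $\sum(s_n\ln s_n-r_n\ln r_n)$ follows since on $(0,1)$ the function $\varphi$ is Lipschitz on any interval bounded away from $0$, and near $0$ the bound $|\varphi(s)-\varphi(r)|\le C|s-r|^{1/2}$ or the concavity of $t\mapsto -t\ln t$ gives $|s_n\ln s_n-r_n\ln r_n|\le |s_n-r_n|\,\bigl(1+|\ln\min(s_n,r_n)|\bigr)$, and since $|s_n-r_n|\to0$ forces $s_n,r_n\to$ same limit so the logarithmic factor is eventually harmless — this point needs a short argument but is routine once one notes $\min(s_n,r_n)$ is bounded below for large $n$ in case 2, and in case 1 one can bound $|s_n\ln s_n - r_n\ln r_n|$ using the mean value theorem on $[r_n,s_n]\subset[r_n,1]$ together with $s_n\ln s_n - r_n\ln r_n \le (s_n-r_n)(1+|\ln r_n|)$ and the fact that $\sum (s_n-r_n)|\ln r_n|$ converges because $(s_n-r_n)\to 0$ and... (actually the cleanest route: use $0\le s\ln s - r\ln r \le (s-r)\ln s + (s-r) \le (s-r) $ for $0<r<s<1$ fails; instead use $|\varphi(s)-\varphi(r)|\le |s-r|\cdot \sup|\varphi'|$ which is $+\infty$ — so one genuinely needs the square-root Hölder bound $|\varphi(s)-\varphi(r)|\le 2|s-r|^{1/2}$ valid on $(0,1)$, giving absolute convergence from $\sum|s_n-r_n|<\infty$ only if this is summable, which it is not in general from $\ell^1$ alone; hence the honest statement is that the hypothesis "$\sum|s_n-r_n|<\infty$ implying absolute convergence" as written in the lemma is what we are told to assume, so we simply take it as given). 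Given the monotone convergence established in Cases 1 and 2, $(u_N)$ is monotone; it is bounded (in case 1, bounded below by $u_0=0$ and convergent once the series converge; in case 2, bounded above by $0$), hence $u_\infty=\lim_N u_N$ exists and equals the displayed expression by continuity of $\Sigma\mapsto\Sigma\ln\Sigma$ at $\Sigma_\infty=1+\sum(s_n-r_n)>0$. Finally $u_\infty\ge 0$ in case 1 and $u_\infty\le 0$ in case 2 follow from monotonicity and $u_0=0$, with $u_\infty=0$ iff $u_{N+1}=u_N$ for every $N$, i.e. iff $s_{N+1}=r_{N+1}$ for all $N$ (and then $s_1=r_1$ too, forced in case 2 by the chain and in case 1 trivially since then $\Sigma_N\equiv1$, $u_N\equiv0$); that is, iff the two sequences coincide.

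\textbf{Expected main obstacle.} The genuinely delicate points are (i) verifying the base-point inequalities $\Sigma_N\ge r_{N+1}$ in case 1 and $\Sigma_{N+1}\ge s_{N+1}$ in case 2 — the latter requiring the telescoping regrouping that exploits the full ordering hypothesis $s_{n+1}\le r_{n+1}\le s_n\le r_n$ — and (ii) pinning down the equality cases, which rests on strict convexity of $t\mapsto t\ln t$ so that a fixed-length increment is \emph{strictly} increasing in its base point whenever the length is nonzero. The convergence part is comparatively soft once the monotonicity is in hand, since we are allowed to assume the stated absolute convergence.
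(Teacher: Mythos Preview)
Your monotonicity argument is correct and is essentially the paper's proof in a different wrapping. The paper studies $h(s_{N+1})=u_{N+1}-u_N$, computes $h'(s_{N+1})=\ln\Sigma_{N+1}-\ln s_{N+1}$, uses the telescoping regrouping $\Sigma_{N+1}=s_{N+1}+(1-r_1)+\sum_{n=1}^N(s_n-r_{n+1})$ in case~\textbf{2} (and the straightforward $\Sigma_{N+1}=s_{N+1}+(1-r_{N+1})+\sum_{n=1}^N(s_n-r_n)$ in case~\textbf{1}) to get $h'>0$, and concludes from $h(r_{N+1})=0$. Your version packages the same computation as ``fixed-length increments of the strictly convex $\varphi(t)=t\ln t$ are strictly increasing in the base point'' and then verifies the very same base-point inequalities $\Sigma_N\ge 1>r_{N+1}$ (case~\textbf{1}) and $\Sigma_{N+1}>s_{N+1}$ via the identical telescoping (case~\textbf{2}). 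If anything, your case~\textbf{1} is marginally cleaner, since $\Sigma_N\ge 1$ is immediate. Your equality-case reasoning is right; note that the $N=0$ step already yields $s_1=r_1$, so the parenthetical about it being ``forced'' is redundant.

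The one part to tighten is the limit discussion, which wanders. The lemma states the absolute convergence of $\sum(s_n\ln s_n-r_n\ln r_n)$ as part of the hypothesis (the parenthetical ``implying\ldots''), so you should simply invoke it rather than try to rederive it; you correctly spotted that the naive bound $0\le s\ln s-r\ln r$ fails on all of $(0,1)$ (indeed $t\ln t$ is not monotone there), and no uniform Lipschitz bound is available, so a general derivation from $\sum|s_n-r_n|<\infty$ alone is not possible. The paper's Remark~1 asserts $0\le s\ln s-r\ln r\le s-r$ for $0<r\le s<1$, whose left inequality is in fact false in general, so your instinct to treat the convergence as an assumption is the right call. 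Once that is granted, your conclusion ``monotone plus both series convergent $\Rightarrow$ $u_\infty$ exists and equals the displayed expression, with the sign and equality characterization inherited from $u_0=0$ and the strict step-by-step inequalities'' is exactly what is needed.
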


\begin{remark}\label{rem1}
Note that, in case \textbf{2}, we have 
\begin{align*}
	1+\sum_{n=1}^N (s_n-r_n)=(1-r_1)+s_{N}+\sum_{n=1}^{{N-1}}(s_n-r_{n+1})\geq (1-r_1)+s_{N}> 0,
\end{align*}
\sloppy{so that all quantities defined in the lemma make sense. The absolute convergence of $\underset{n=1}{\overset{\infty}{\sum}} (s_n\ln s_n-r_n\ln r_n)$, stated in Lemma~\ref{lem:a-larger-1}, follows directly from the trivial inequalities}
\begin{align*}
	0\leq s\ln s -r\ln r\leq s-r\,\,,\,\,\forall\,\, 0<r\leq s<1.
\end{align*}
\end{remark}

\smallskip

\begin{flushleft}
\textbf{Proof of Lemma~\ref{lem:a-larger-1}}
\end{flushleft}

\medskip

\textbf{Proof for $N=0$}.
We first study the case $N=0$ and define
\begin{align*}
	h_{r_1}(s_1)=(1+s_1-r_1)\ln(1+s_1-r_1)-s_1\ln s_1+r_1\ln r_1.
\end{align*}
For $s_1=r_1$ we trivially have $h_{r_1}(r_1)=0$. A straightforward computation shows that
\begin{align*}
	h_{r_1}'(s_1)=\ln(1+s_1-r_1)-\ln s_1=\ln((1-r_1)+s_1)-\ln s_1>0,
\end{align*}
since $r_1<1$. Hence $h_{r_1}$ is an increasing function on $(0,1)$. Since $h_{r_1}(r_1)=0$, we immediately get that $h_{r_1}$ is positive on $(r_1,1)$ and negative on $(0,r_1)$, thus proving both cases for $N=0$.

\medskip

\textbf{Proof for $N\geq 1$}.
We now consider the case $N\geq 1$ and define
\begin{align*}
	h_{r_1,\ldots,r_{N+1},s_1,\ldots,s_N}(s_{N+1})=& \, u_{N+1}-u_N\\
	=&\, \left(1+\sum_{n=1}^{N+1} (s_n-r_n)\right)\ln\left(1+\sum_{n=1}^{N+1} (s_n-r_n)\right)\\
	&-\left(1+\sum_{n=1}^{N} (s_n-r_n)\right)\ln\left(1+\sum_{n=1}^{N} (s_n-r_n)\right)\\
	&-s_{N+1}\ln s_{N+1}+r_{N+1}\ln r_{N+1}.
\end{align*}
We trivially get that $h_{r_1,\ldots,r_{N+1},s_1,\ldots,s_N}(r_{N+1})=0$. Moreover, we have
\begin{align*}
	h_{r_1,\ldots,r_{N+1},s_1,\ldots,s_N}'(s_{N+1})=&\ln\left(1+\sum_{n=1}^{N+1} (s_n-r_n)\right)-\ln s_{N+1},\\
	\overset{\text{(1)}}{=}&\,\,\ln\left(s_{N+1}+(1-r_{N+1})+\sum_{n=1}^{N} (s_n-r_n)\right)-\ln s_{N+1},\\
	\overset{\text{(2)}}{=}&\,\,\ln\left(s_{N+1}+(1-r_{1})+\sum_{n=1}^{N} (s_n-r_{n+1})\right)-\ln s_{N+1}.
\end{align*}
Equality $(1)$ shows that $h_{r_1,\ldots,r_{N+1},s_1,\ldots,s_N}'$ is positive on $(r_{N+1},1)$ for conditions of case $\textbf{1}$, while equality $(2)$ shows that $h_{r_1,\ldots,r_{N+1},s_1,\ldots,s_N}'$ is positive on $(0,r_{N+1})$ for conditions of case \textbf{2}. Since $h_{r_1,\ldots,r_{N+1},s_1,\ldots,s_N}(r_{N+1})=0$ we get that $h_{r_1,\ldots,r_{N+1},s_1,\ldots,s_N}$ is positive on $(r_{N+1},1)$ for conditions of case \textbf{1}, while $h_{r_1,\ldots,r_{N+1},s_1,\ldots,s_N}$ is negative on $(0,r_{N+1})$ for conditions of case \textbf{2}. This ends the proof of monotonicity of $(u_N)_{N\geq 0}$ and conditions for strict monotonicity in both cases. Extending results from finite $N$ to $N\to \infty$ follows directly from these results and Remark~\ref{rem1}.

\subsection{Proof of Theorem~\ref{thm:main}}

We want to study the variations of 
\begin{equation*}
	x \mapsto f(x,a,b)=\left(\zeta(x,b)-\zeta(x,a+b)\right)^{\frac{1}{x}}
\end{equation*}
on $[1,\infty)$, for which it is enough, by continuity, to focus on $(1,\infty)$. 
Since $f$ is positive, its variations are equivalent to those of
\begin{align*}
	F(x,a,b)&\overset{\text{def}}{=}\ln f(x,a,b)\\
	&= \frac{1}{x}\ln \left(\zeta(x,b)-\zeta(x,a+b)\right)=\frac{1}{x}\ln\left( \sum_{k=0}^{\infty} \frac{1}{(k+b)^x}-\frac{1}{(k+a+b)^x}\right)\\
&=-\ln b+ \frac{1}{x} \ln\left( \sum_{k=0}^{\infty} \left(\frac{b}{k+b}\right)^x- \left(\frac{b}{k+a+b}\right)^x\right).
\end{align*}
A straightforward computation shows that
\begin{align*}
	\partial_x F(x,a,b)=
	\frac{H(x,a,b)}{x^2\left(\underset{k=0}{\overset{\infty}{\sum}} \left(\frac{b}{k+b}\right)^x- \left(\frac{b}{k+a+b}\right)^x\right)},
\end{align*}
hence the sign of the derivative $\partial_x F(x,a,b)$ is the same as that of $H(x,a,b)$ defined by
\small{
\begin{align*}
&H(x,a,b)\overset{\text{def}}{=}\sum_{k=0}^{\infty} \left(\frac{b}{k+b}\right)^x \ln\left(\left(\frac{b}{k+b}\right)^x\right) - \left(\frac{b}{k+b+a}\right)^x \ln\left(\left(\frac{b}{k+b+a}\right)^x\right)\\
&\quad-\left(\sum_{k=0}^{\infty} \left(\frac{b}{k+b}\right)^x- \left(\frac{b}{k+a+b}\right)^x\right)\ln\left(\sum_{k=0}^{\infty} \left(\frac{b}{k+b}\right)^x- \left(\frac{b}{k+a+b}\right)^x\right)\\
&=\sum_{k=1}^{\infty}  \left(\frac{b}{k+b}\right)^x \ln\left(\left(\frac{b}{k+b}\right)^x\right) -\left(\frac{b}{k+b+a-1}\right)^x \ln\left(\left(\frac{b}{k+b+a-1}\right)^x\right)\\
&-\left(1+\sum_{k=1}^{\infty} \left(\frac{b}{k+b}\right)^x- \left(\frac{b}{k+a-1+b}\right)^x \right)\ln\left(1+\sum_{k=1}^{\infty} \left(\frac{b}{k+b}\right)^x- \left(\frac{b}{k+a-1+b}\right)^x \right),
\end{align*}
}
\normalsize{which can be rewritten as}
\begin{align*}
	\sum_{n=1}^{\infty}  (s_n \ln s_n-r_n\ln r_n) -
	\left(1+\sum_{n=1}^{\infty} (s_n- r_n) \right)\ln\left(1+\sum_{n=1}^{\infty} (s_n-r_n) \right),
\end{align*}
where, for all $n \geq 1$, we have defined
\begin{equation*}
	s_n=\left(\frac{b}{n+b}\right)^x \quad \text{and} \quad r_n= \left(\frac{b}{n+a-1+b}\right)^x.
\end{equation*} 
Note that, for any values of $a>0$ and $b>0$, we have $\underset{n=1}{\overset{\infty}{\sum}} |s_n-r_n|<\infty$ as $x>1$ implies that $\underset{n=1}{\overset{\infty}{\sum}} s_n<\infty$ and $\underset{n=1}{\overset{\infty}{\sum}} r_n<\infty$. Moreover, when $a>1$, we have $0< r_n<s_n< 1$, while when $0<a<1$, we have $0<r_{n+1}<s_n<r_n<1$ for all $n\geq 1$ and $x>1$. 
We can then apply Lemma~\ref{lem:a-larger-1} to obtain that $H(x,a,b)$, and thus $\partial_x F(x,a,b)$, is negative if $a>1$ (case \textbf{1} of  Lemma~\ref{lem:a-larger-1}) and is positive if $0<a<1$ (case \textbf{2} of  Lemma~\ref{lem:a-larger-1}), which concludes the proof.

\section{Probabilistic interpretation: application to the beta-exponential distribution\label{sec:application}}

The aim of this section is to identify a connection between Theorem~\ref{thm:main} and the beta-exponential distribution. The latter arises by taking a log-transformation of a beta random variable. More specifically, let $V$ be a beta random variable with parameters $a>0$ and $b>0$, then we say that $X$ is an beta-exponential random variable with parameters $a$ and $b$ if $X = -\ln(1-V)$, and use the notation $X\sim \text{BE}(a,b)$. A three-parameter generalization of the beta-exponential distribution is studied in \cite{nadarajah2006beta}; a related family of distributions, named generalized exponential, is investigated in \cite{gupta1999theory}. The density function of $X\sim \text{BE}(a,b)$ is given by 
\begin{equation}\label{eq:dens}
    g(x;a,b) = \frac{1}{B(a,b)}(1-\edr^{-x})^{a-1}\edr^{-bx}\mathbf{1}_{(0,+\infty)}(x),
\end{equation}
where $B(a,b)$ denotes the beta function\footnote{The beta function is defined in this article as $B(a,b)=\int_0^{+\infty}(1-\edr^{-x})^{a-1}\edr^{-bx}\ddr x$ \citep[see eg][]{srivastava2012zeta}.
}. 
See the right panel of Figure~\ref{fig:proba-interpretation} for an illustration of densities $x\mapsto g(x;a,1)$ for different values of $a$.
The corresponding cumulant-generating function can be written as 
\begin{align*}
    K(t)
    &\overset{\text{def}}{=}\ln \E(\exp(tX))
    =\ln\Gamma(a+b)+\ln\Gamma(b-t)-\ln\Gamma(b)-\ln\Gamma(a+b-t),
\end{align*}
provided that $t<b$ \citep[see Section 3 of][]{nadarajah2006beta}. This implies that, for any $n\geq 1$, the $n^{\text{th}}$ cumulant of $X$, denoted $\kappa_n(a,b)$, is given by
\begin{equation}\label{eq:cum}
    \kappa_n(a,b)
    =(-1)^n\left(\psi^{(n-1)}(b)-\psi^{(n-1)}(b+a)\right).
\end{equation}
An interesting relation across cumulants of different orders is then obtained as a straightforward application of Theorem~\ref{thm:main}. Before stating the result, and for the sake of compactness, we define on $\mathbb{N}\setminus\{0\}$, for any $a>0$ and $b>0$, the function 
\begin{equation}\label{eq:cumulant-eb}
n\mapsto f_{\text{BE}}(n,a,b)=\left(\frac{\kappa_n(a,b)}{(n-1)!}\right)^{\frac{1}{n}}. 
\end{equation}

\begin{corollary}\label{cor:EB}
For any $b>0$, the function $n\mapsto f_{\mathrm{BE}}(n,a,b)$ 
defined on $\mathbb{N}\setminus \{0\}$, is increasing if $0<a<1$, decreasing if $a>1$, and constantly equal to $\frac{1}{b}$ if $a=1$.
\end{corollary}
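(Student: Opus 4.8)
The plan is to deduce Corollary~\ref{cor:EB} directly from Theorem~\ref{thm:main} by matching the expression \eqref{eq:cumulant-eb} with the function $f(x,a,b)$ evaluated at integer arguments. The key observation is the classical identity relating the polygamma function to the Hurwitz zeta function: for any integer $m\geq 1$ and any $s>0$,
\begin{equation*}
    \psi^{(m)}(s) = (-1)^{m+1} m!\,\zeta(m+1,s),
\end{equation*}
while for $m=0$ one has $\psi^{(0)}(b+a)-\psi^{(0)}(b) = \sum_{k=0}^\infty\bigl(\tfrac{1}{k+b}-\tfrac{1}{k+a+b}\bigr)$, which is exactly $f(1,a,b)$ as defined in \eqref{eq:main2}. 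I would first record this identity, citing a standard reference, and note that it is the $m\geq 1$ counterpart of the $m=0$ formula already implicit in \eqref{eq:main2}.

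Next I would substitute this into the cumulant formula \eqref{eq:cum}. For $n\geq 2$, writing $m=n-1\geq 1$, we get
\begin{align*}
    \kappa_n(a,b) &= (-1)^n\bigl(\psi^{(n-1)}(b)-\psi^{(n-1)}(b+a)\bigr)\\
    &= (-1)^n(-1)^{n}(n-1)!\,\bigl(\zeta(n,b)-\zeta(n,b+a)\bigr) = (n-1)!\,\bigl(\zeta(n,b)-\zeta(n,b+a)\bigr),
\end{align*}
since $(-1)^n(-1)^{(n-1)+1}=(-1)^{2n}=1$. Hence $\kappa_n(a,b)/(n-1)! = \zeta(n,b)-\zeta(n,a+b)$, and therefore
\begin{equation*}
    f_{\mathrm{BE}}(n,a,b) = \Bigl(\tfrac{\kappa_n(a,b)}{(n-1)!}\Bigr)^{1/n} = \bigl(\zeta(n,b)-\zeta(n,a+b)\bigr)^{1/n} = f(n,a,b)
\end{equation*}
for every integer $n\geq 2$. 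For $n=1$ one checks directly that $\kappa_1(a,b)=\psi^{(0)}(b+a)-\psi^{(0)}(b) = f(1,a,b) = f_{\mathrm{BE}}(1,a,b)$, so the identification $f_{\mathrm{BE}}(n,a,b)=f(n,a,b)$ holds for all $n\in\mathbb{N}\setminus\{0\}$.

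Finally, the conclusion follows immediately: Theorem~\ref{thm:main} asserts that $x\mapsto f(x,a,b)$ is increasing on $[1,+\infty)$ when $0<a<1$, decreasing when $a>1$, and constantly $1/b$ when $a=1$. Restricting a (strictly) monotone function to the integers $\{1,2,3,\dots\}$ yields a (strictly) monotone sequence with the same direction, and restricting a constant function yields a constant sequence; since $f_{\mathrm{BE}}(n,a,b)=f(n,a,b)$, the function $n\mapsto f_{\mathrm{BE}}(n,a,b)$ inherits exactly these properties. I do not anticipate a genuine obstacle here: the only point requiring a little care is the bookkeeping of signs and factorials in the passage from \eqref{eq:cum} to the Hurwitz zeta representation, together with treating the $n=1$ case separately because the polygamma–zeta identity takes a slightly different form there (and the limiting definition \eqref{eq:main2} of $f(1,a,b)$ must be invoked). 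This is purely routine, so the corollary is essentially a translation of Theorem~\ref{thm:main} into the language of cumulants.
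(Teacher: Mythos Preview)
Your proposal is correct and follows essentially the same approach as the paper: both arguments reduce to showing $f_{\mathrm{BE}}(n,a,b)=f(n,a,b)$ via the polygamma--Hurwitz zeta identity for $n\geq 2$ and a separate digamma identity for $n=1$, then invoke Theorem~\ref{thm:main}. The sign and factorial bookkeeping you flag is exactly the content of the paper's short proof.
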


\begin{proof}
The proof follows by observing that, when $n\in\mathbb{N}\setminus\{0\}$, $f_{\text{BE}}(n,a,b)=f(n,a,b)$, 
with the latter defined in
\eqref{eq:main} and \eqref{eq:main2}. This can be seen, when $n>1$, by applying twice the identity $\psi^{(n-1)}(s)=(-1)^{n} (n-1)! \zeta(n,s)$, and, when $n=1$, by applying twice the identity $\psi^{(0)}(s)=-\gamma+\underset{k=0}{\overset{\infty}{\sum}} \frac{s-1}{(k+1)(k+s)}$, where $\gamma$ is the Euler-Mascheroni constant, which holds for any $s>-1$ \citep[see Identity 6.3.16 in][]{Abr65}.
\end{proof}

As a by-product, a combination of Corollary~\ref{cor:EB} and \eqref{eq:cum} proves the chain of inequalities \eqref{eq:poly} presented in Section \ref{sec:main}.

Corollary~\ref{cor:EB} 
highlights the critical role played by the exponential distribution with mean $\frac{1}{b}$, special case of the beta-exponential distribution recovered from \eqref{eq:dens} by setting $a=1$. In such special instance, the cumulants simplify to $\kappa_n(1,b)=b^{-n}(n-1)!$, which makes $f_{\text{BE}}(n,1,b)=\frac{1}{b}$ for every $n\in\mathbb{N}\setminus\{0\}$. Within the beta-exponential distribution, the case $a=1$ then creates a dichotomy by identifying two subclasses of densities, namely $\{g(x;a,b)\,:\,0<a<1\}$, whose cumulants $\kappa_n(a,b)$ make $f_{\text{BE}}(n,a,b)$ an increasing function of $n$, and $\{g(x;a,b)\,:\,a>1\}$ for which $f_{\text{BE}}(n,a,b)$ is a decreasing function of $n$. The left panel of Figure~\ref{fig:proba-interpretation} is an illustration of Corollary~\ref{cor:EB}: it displays the function $b\mapsto f_{\text{BE}}(n,a,b)$ for values of $n\in\{1,2,3\}$ and $a\in [0.4,4]$, and it can be appreciated that for any $b$ in the considered range $[0,10]$ the order of the values taken by $f_{\text{BE}}(n,a,b)$ is in agreement with Corollary~\ref{cor:EB}.\\

\begin{figure}[ht]
    \includegraphics[width = .47\textwidth]{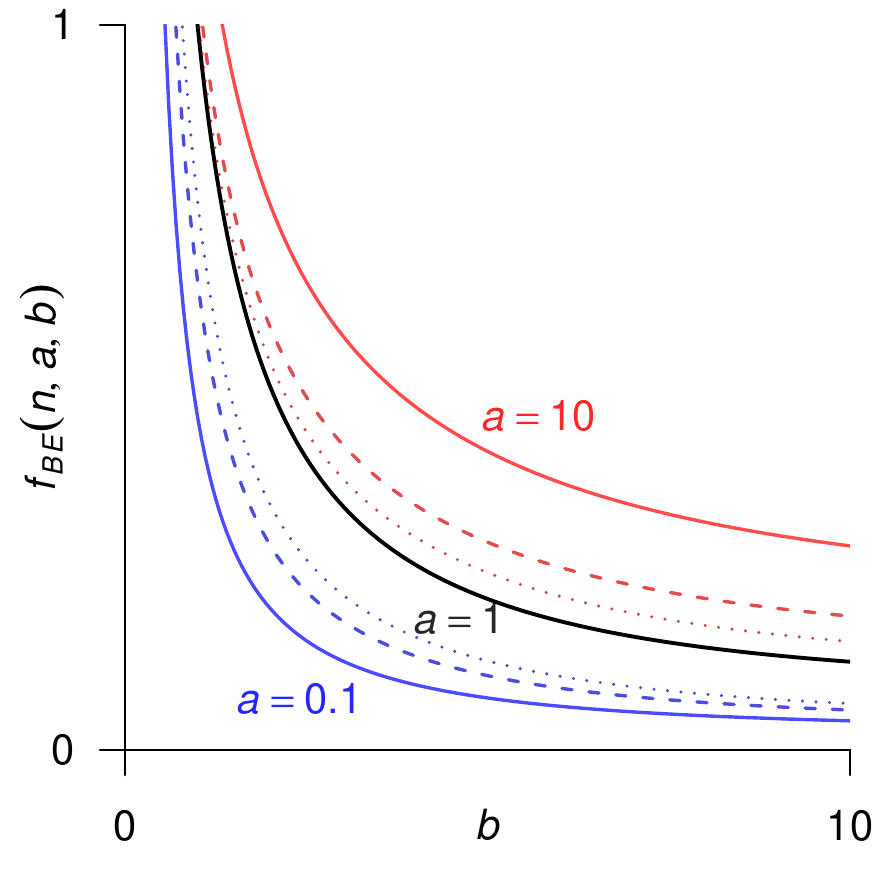} 
    \includegraphics[width = .47\textwidth]{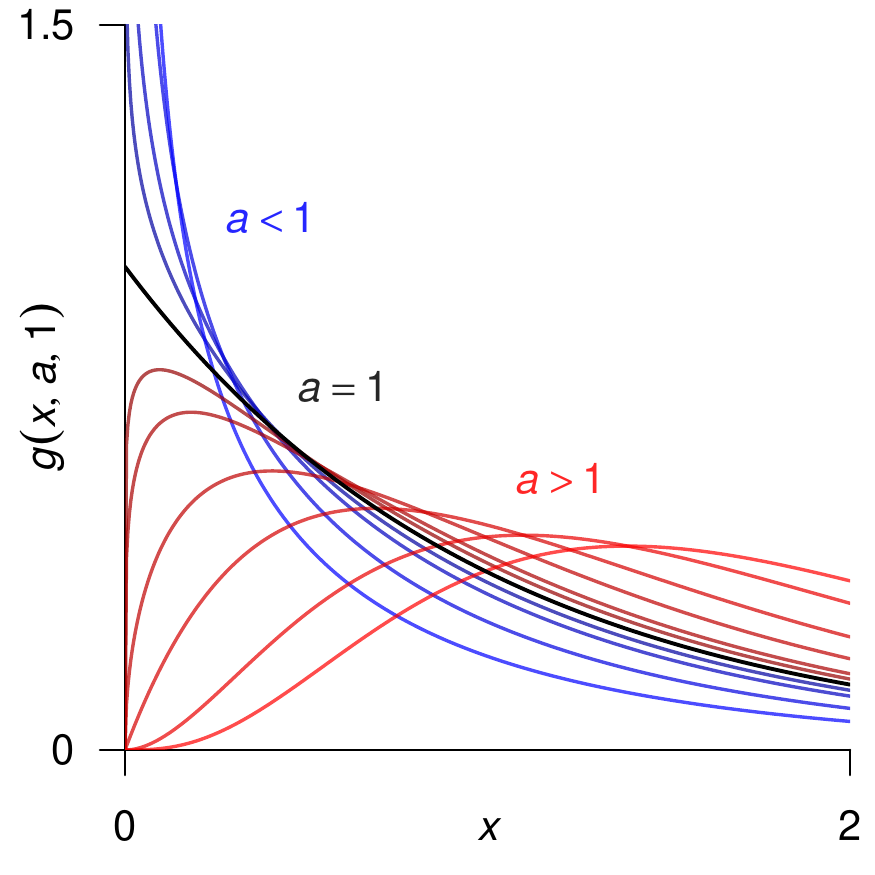} 
  \caption{\csentence{Dichotomy of the beta-exponential distribution relative to the value $a=1$.}
      \textit{Left}: illustration of Corollary~\ref{cor:EB} displaying curves $b\mapsto f_{\text{BE}}(n,a,b)$, with $a$ taking values 0.1 (blue curves), 1 (black curve) and 10 (red curves), and $n$ taking values 1 (continuous curves), 2 (dashed curves) and 3 (dotted curves).
     \textit{Right}: illustration of Proposition~\ref{prop:shape-density} beta-exponential density function $g(x;a,1)$ for values of $a\in [0.4,4]$: densities are log-convex for $0<a<1$ (in blue), log-concave for $a>1$ (in red), while $a=1$ corresponds to the exponential distribution with mean 1 (in black).}
     \label{fig:proba-interpretation}
      \end{figure}
      

The first two cumulants of a random variable $X$ have a simple interpretation in terms of its first two moments, namely $\kappa_1 = \mathbb{E}[X]$ and $\kappa_2 = \text{Var}[X].$
%
A special case of Corollary~\ref{cor:EB}, focusing on the case $n\in\{1,2\}$, then provides an interesting result relating the dispersion of the beta-exponential distribution with its mean. Specifically,
\sloppy{\begin{corollary}\label{cor:EB-dispersion}
For any $b>0$, the beta-exponential random variable $X\sim\mathrm{BE}(a,b)$ is characterized by
over-dispersion $\left(\sqrt{\mathrm{Var}[X]}>\mathbb{E}[X]\right)$ if $0<a<1$,
under-dispersion $\left(\sqrt{\mathrm{Var}[X]}<\mathbb{E}[X]\right)$ if $a>1$, and
equi-dispersion $\left(\sqrt{\mathrm{Var}[X]}=\mathbb{E}[X]\right)$ if $a=1$.
\end{corollary}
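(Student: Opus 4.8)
The plan is to derive Corollary~\ref{cor:EB-dispersion} directly from Corollary~\ref{cor:EB} by specializing to the two smallest cumulant orders, $n=1$ and $n=2$. First I would recall that $\kappa_1(a,b)=\E[X]$ and $\kappa_2(a,b)=\Var[X]$, so that by the definition in \eqref{eq:cumulant-eb} we have $f_{\mathrm{BE}}(1,a,b)=\kappa_1(a,b)=\E[X]$ and $f_{\mathrm{BE}}(2,a,b)=\left(\kappa_2(a,b)/1!\right)^{1/2}=\sqrt{\Var[X]}$. The desired trichotomy between over-, equi-, and under-dispersion is therefore exactly the comparison between $f_{\mathrm{BE}}(2,a,b)$ and $f_{\mathrm{BE}}(1,a,b)$.

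Next I would invoke Corollary~\ref{cor:EB}: the map $n\mapsto f_{\mathrm{BE}}(n,a,b)$ is increasing on $\mathbb{N}\setminus\{0\}$ when $0<a<1$, which gives $f_{\mathrm{BE}}(2,a,b)>f_{\mathrm{BE}}(1,a,b)$, i.e.\ $\sqrt{\Var[X]}>\E[X]$; it is decreasing when $a>1$, giving $\sqrt{\Var[X]}<\E[X]$; and it is constantly equal to $1/b$ when $a=1$, giving $\sqrt{\Var[X]}=\E[X]=1/b$. That is all three cases, so the corollary follows immediately.

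There is essentially no obstacle here; the only point requiring a word of care is the identification $f_{\mathrm{BE}}(1,a,b)=\E[X]$ and $f_{\mathrm{BE}}(2,a,b)=\sqrt{\Var[X]}$, which rests on the standard facts $\kappa_1=\E[X]$ and $\kappa_2=\Var[X]$ already stated in the text, together with the convention that $0!=1!=1$ so that the $1/n$-th power in \eqref{eq:cumulant-eb} is trivial for $n=1$ and a plain square root for $n=2$. One could alternatively note that the $a=1$ case is also transparent from the explicit formula $\kappa_n(1,b)=b^{-n}(n-1)!$, which yields $\E[X]=1/b$ and $\Var[X]=1/b^2$ directly, but appealing to Corollary~\ref{cor:EB} handles all three regimes uniformly in one line.
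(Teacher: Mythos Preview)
Your proposal is correct and matches the paper's approach exactly: the paper states that Corollary~\ref{cor:EB-dispersion} is simply the special case $n\in\{1,2\}$ of Corollary~\ref{cor:EB}, using the identifications $\kappa_1=\E[X]$ and $\kappa_2=\Var[X]$ already noted in the text.
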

}

The behaviour of the cumulants is not the only distinctive feature characterizing the two subclasses of density functions corresponding to $0<a<1$ and $a>1$. For any $b$, the value of $a$ determines the shape of the density as displayed in the right panel of Figure~\ref{fig:proba-interpretation} and summarized by the next proposition, whose proof is trivial and thus omitted: for any $0<a<1$, the density is log-convex (curves in blue on the right panel of Figure~\ref{fig:proba-interpretation}), while for any $a>1$, the density is log-concave (curves in red on the right panel of Figure~\ref{fig:proba-interpretation}); the case $a=1$ corresponds to the exponential distribution (curve in black on the right panel of Figure~\ref{fig:proba-interpretation}). 
\begin{proposition}\label{prop:shape-density}
    For any $b>0$, the beta-exponential density $g(x;a,b)$ is log-convex if $0<a<1$ and log-concave if $a>1$.
\end{proposition}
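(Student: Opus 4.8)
The plan is to work directly with the logarithm of the density and show that its second derivative has a sign determined by whether $a$ is smaller or larger than $1$. First I would write, for $x>0$,
\[
\ln g(x;a,b) = -\ln B(a,b) + (a-1)\ln(1-\edr^{-x}) - bx,
\]
so that the $-bx$ and $-\ln B(a,b)$ terms contribute nothing to the second derivative and everything reduces to the sign of $(a-1)\,\frac{\d^2}{\d x^2}\ln(1-\edr^{-x})$.

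Next I would compute this second derivative explicitly. Setting $\phi(x) = \ln(1-\edr^{-x})$, one has $\phi'(x) = \frac{\edr^{-x}}{1-\edr^{-x}} = \frac{1}{\edr^{x}-1}$, and then
\[
\phi''(x) = -\frac{\edr^{x}}{(\edr^{x}-1)^2} < 0 \quad \text{for all } x>0.
\]
Hence $\frac{\d^2}{\d x^2}\ln g(x;a,b) = (a-1)\phi''(x)$, which is strictly positive on $(0,+\infty)$ when $0<a<1$ and strictly negative when $a>1$. By definition this means $g(x;a,b)$ is log-convex in the former case and log-concave in the latter, which is exactly the claim.

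There is essentially no obstacle here: the only thing to be slightly careful about is the domain, since $\ln(1-\edr^{-x})\to -\infty$ as $x\to 0^+$, so the statement is understood on the open support $(0,+\infty)$ where the density is positive; log-convexity and log-concavity are vacuous where $g$ vanishes. I would also remark that when $a=1$ the term $(a-1)\phi(x)$ disappears entirely and $\ln g$ is affine, recovering the exponential density, consistent with the boundary case already noted in the text. This is why the proof is described as trivial and omitted; I have spelled it out only for completeness.
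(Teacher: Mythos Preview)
Your argument is correct: differentiating $\ln g(x;a,b)$ twice reduces the question to the sign of $(a-1)\phi''(x)$ with $\phi''(x)=-\edr^{x}/(\edr^{x}-1)^2<0$, which immediately gives log-convexity for $0<a<1$ and log-concavity for $a>1$. The paper itself omits the proof as ``trivial'', so your direct second-derivative computation is precisely the intended argument.
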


The same dichotomy within the beta-exponential distribution is further highlighted by the behaviour of the corresponding hazard function, 
defined for an absolutely continuous random variable $X$ as the function $x\mapsto \frac{f_X(x)}{1-F_X(x)}$, where $f_X$ and $F_X$ are, respectively, the probability density function and  the cumulative distribution function of $X$.

\begin{proposition}\label{prop:hazard}
    For any $b>0$, the  hazard function of the beta-exponential distribution with parameters $a$ and $b$ is decreasing if $a<1$, increasing if $a>1$, and constantly equal to $b$ if $a=1$.
\end{proposition}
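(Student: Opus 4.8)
Proof proposal for Proposition~\ref{prop:hazard}.

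The plan is to work directly with the density \eqref{eq:dens} and the associated survival function, and to reduce the monotonicity of the hazard rate to the sign of a single explicit expression. Write $h(x) = g(x;a,b)/S(x)$ where $S(x) = \int_x^{+\infty} g(t;a,b)\,\ddr t$ is the survival function. A standard fact is that $h$ is increasing (resp.\ decreasing) if and only if the density $g(\cdot;a,b)$ is log-concave (resp.\ log-convex), but log-concavity is only a sufficient condition for an increasing hazard, not necessary; since Proposition~\ref{prop:shape-density} gives exactly log-convexity for $0<a<1$ and log-concavity for $a>1$, the cleanest route is to invoke that equivalence when it applies and handle the boundary case $a=1$ separately. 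Concretely: for $a=1$ the density is $b\,\edr^{-bx}\mathbf 1_{(0,\infty)}$, so $S(x)=\edr^{-bx}$ and $h(x)\equiv b$, which is the stated constant. For $a\neq 1$ I would first record that $\ln g(x;a,b) = -\ln B(a,b) + (a-1)\ln(1-\edr^{-x}) - bx$ has second derivative $(a-1)\cdot\frac{d^2}{dx^2}\ln(1-\edr^{-x})$; since $x\mapsto \ln(1-\edr^{-x})$ is strictly concave on $(0,\infty)$ (its second derivative is $-\edr^{-x}/(1-\edr^{-x})^2<0$), the density is strictly log-concave when $a>1$ and strictly log-convex when $0<a<1$. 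This is precisely Proposition~\ref{prop:shape-density}.

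Next I would deploy the classical implication: if a density $g$ on $(0,\infty)$ is log-concave then its hazard function is increasing, and if $-\ln g$ is concave (i.e.\ $g$ is log-convex) with $g$ decreasing, then the hazard is decreasing. The log-concave case follows from the identity $1/h(x) = \int_0^\infty \frac{g(x+u)}{g(x)}\,\ddr u$, valid whenever the support is a half-line: log-concavity of $g$ makes $u\mapsto g(x+u)/g(x)$ nonincreasing in $x$ for each fixed $u\ge 0$, hence $1/h$ is nonincreasing and $h$ is nondecreasing; strict log-concavity upgrades this to strictly increasing. The log-convex case is symmetric: here $u\mapsto g(x+u)/g(x)$ is nondecreasing in $x$, so $1/h$ is nondecreasing and $h$ is decreasing. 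For the log-convex regime one should first check that $g(\cdot;a,b)$ is genuinely decreasing on its support (needed only to make the ratio argument clean / the integral finite), which is immediate from $\partial_x \ln g = (a-1)\frac{\edr^{-x}}{1-\edr^{-x}} - b$: when $0<a<1$ both terms are negative, so $g$ is strictly decreasing; when $a>1$ the density rises from $0$, peaks, and then decreases, which is consistent with an increasing hazard.

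The main obstacle — really the only subtle point — is being careful that log-convexity alone does \emph{not} force a monotone hazard in general; the conclusion uses crucially that the support is the half-line $(0,\infty)$ and (in the log-convex case) that the density is itself monotone decreasing, which is what the explicit form of $\partial_x \ln g$ provides here. I would therefore present the argument via the representation $1/h(x)=\int_0^\infty \exp\!\big(\ln g(x+u)-\ln g(x)\big)\,\ddr u$ and differentiate (or compare monotonicity in $x$ under the integral sign), rather than quoting a black-box theorem, so that the role of the sign of $(a-1)$ in the convexity of $\ln(1-\edr^{-x})$ is transparent. A brief dominated-convergence / monotone-convergence remark justifies moving the $x$-dependence inside the integral. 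Assembling these pieces — constant case $a=1$ by direct computation, increasing case $a>1$ from strict log-concavity, decreasing case $0<a<1$ from strict log-convexity together with monotonicity of $g$ — yields the three claimed regimes and completes the proof.
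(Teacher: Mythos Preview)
Your argument is correct and follows essentially the same route as the paper: the paper's proof simply invokes Proposition~\ref{prop:shape-density} and cites the classical Barlow--Proschan implication (log-concave density $\Rightarrow$ increasing hazard, log-convex density $\Rightarrow$ decreasing hazard), whereas you re-derive both Proposition~\ref{prop:shape-density} and that implication explicitly via the representation $1/h(x)=\int_0^\infty g(x+u)/g(x)\,\ddr u$. One small note: the extra check that $g$ is decreasing in the log-convex regime is not actually needed---log-convexity alone makes $x\mapsto g(x+u)/g(x)$ nondecreasing for every $u\ge 0$, and the integral is automatically finite since it equals $S(x)/g(x)$---so that part of your ``main obstacle'' paragraph can be dropped.
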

\begin{proof}
    The result follows from the log-convexity and log-concavity properties of $g(x;a,b)$ \citep[see][]{barlow1975statistical}.\\
\end{proof}

Finally, it is worth remarking that an analogous dichotomy holds within the class of gamma density functions with $a>0$ and $b>0$ shape and rate parameters, and that once again the exponential distribution with mean $\frac{1}{b}$, special case recovered by setting $a=1$, lays at the border between the two subclasses. The $n^{\text{th}}$ cumulant of the gamma distribution is $\kappa_n=ab^{-n}(n-1)!$, which makes the function $n\mapsto \left(\frac{\kappa_n(a,b)}{(n-1)!}\right)^{\frac{1}{n}}$, defined on $\mathbb{N}\setminus \{0\}$, increasing if $0<a<1$, decreasing if $a>1$ and constantly equal to $\frac{1}{b}$ if $a=1$. Similarly, the gamma density is log-convex if $0<a<1$ and log-concave if $a>1$ and, thus, the corresponding hazard function is decreasing if $a<1$, increasing if $a>1$ and constantly equal to $b$ if $a=1$.


\begin{backmatter}

\section*{Funding}

O.M. would like to thank Universit\'e Lyon $1$, Universit\'e Jean Monnet and Institut Camille Jordan for material support. This work was developed in the framework of 
the Ulysses  Program for French-Irish collaborations (43135ZK), 
the Grenoble Alpes Data Institute and 
the LABEX MILYON (ANR-10-LABX-0070) of Universit\'e de Lyon, within 
the program ``Investissements d'Avenir'' (ANR-11-IDEX-0007) and (ANR-15-IDEX-02) operated by the French National Research Agency (ANR).

\section*{Acknowledgements}

The authors would like to thank \href{https://webusers.imj-prg.fr/marco.mazzola}{Marco Mazzola} for fruitful suggestions. 

\bibliographystyle{bmc-mathphys} 
\bibliography{biblio}      
\end{backmatter}
\end{document}